\newtheorem{theorem}{Theorem}[section]
\newtheorem{lemma}[theorem]{Lemma}
\newtheorem {corollary}[theorem]{Corollary}
\theoremstyle {definition}
\theoremstyle {remark}
\newtheorem{remark}[theorem]{Remark}
\def\exp{\operatorname{exp}}
\def\max{\operatorname{max}}
\newcommand{\subjclass}[2][2010]{%
  \let\@oldtitle\@title%
  \gdef\@title{\@oldtitle\footnotetext{#1 \emph{Mathematics subject classification.} #2}}%
}
\newcommand{\keywords}[1]{%
  \let\@@oldtitle\@title%
  \gdef\@title{\@@oldtitle\footnotetext{\emph{Key words and phrases.} #1.}}%
}
\author{NGUYEN TAT THANG\\
Institute of Mathematics\\
 18 Hoang Quoc Viet road\\
Cau Giay District, 10307 Hanoi, Vietnam\\
\texttt{ntthang@math.ac.vn}}
\title{Generalized Broughton polynomials and characteristic varieties}
\date{}
 \keywords{Broughton polynomial, characteristic varieties, translated component, connected generic fiber}
 \subjclass{Primary 14C21, 14H50; Secondary 14H20, 32S22.}
\begin{document}

\maketitle

\begin{abstract}
 We introduce a family of generalized Broughton polynomials and compute the characteristic varieties of complement of a curve arrangement defined by fibers of some generalized Broughton polynomials.
\end{abstract}


\section{Introduction}
In \cite{Br} Broughton considered the polynomial
$$f(x, y)= x(xy-1).$$
The associated function $f: \mathbb{C}^2\to \Bbb{C}$ has no critical value, but the fiber $f^{-1}(0)$ is not diffeomorphic to the generic one. This is explained by the existence of the so-called  "critical value at infinity", see \cite{HL}, \cite{Br}, \cite{D1}. 

In the paper \cite{Z} Zahid introduced a family of polynomials: 
$$f_{p,q}(x, y)= x^p[xy(x + 2) \cdots (x + q) - 1],$$
which are called {\it generalized Broughton polynomials}, where $p\geq 1$ and $q\geq 1$ are integer number, with the convention 
$$f_{p,1}= x^p (xy -1).$$
By computing the characteristic variety $\mathcal{V}_1(M),$ where 
$$M= \mathbb{C}^2\setminus (C_0\cup C_1)$$ 
is a complement of a curve arrangement defined by a component of the $0-$fiber:
$$C_0=\begin{cases}
\{xy(x + 2) \cdots (x + q) - 1=0\}\, \, & \textrm{if}\, \, q>1\\
\{xy-1= 0\}\, \, & \textrm{if}\, \, q=1
\end{cases}$$
 and 
the generic fiber of $f_{p, q}$:
$$C_1=\{f_{p,q}(x, y)=1\},$$
 the author obtained examples of characteristic varieties with an arbitrary number of translated components for complements of affine curve arrangements consisting of just two rational curves, see \cite{Z}.

The aim  of this paper is to generalize the Zahid's work in \cite{Z}. More precisely, we introduce a family of {\it generalized Broughton polynomials}, which generalizes the Zahid's one. Namely
$$F(x, y):= p(x) (yq(x)-1)$$
where $p(x), q(x)\in \mathbb{C}[x].$

Put $f(x, y) := F(x, y)-1$ and $g(x, y):= yq(x)-1.$ We denote by $M$ the complement 
$$M= \mathbb{C}^2\setminus \{f(x, y)=0, g(x, y)=0\}.$$
 The main result in this note shows how to compute the characteristic variety  $\mathcal{V}_1(M)$, for all polynomials $p(x), q(x)$ 
 such that they have at least one common root and $p(x) + 1, q(x)$ have not any common root.
 
 In Section 2 we recall the definition and the basic properties of the characteristic and resonance varieties. In Section 3 we compute the characteristic variety  $\mathcal{V}_1(M)$. In particular, we obtain examples of characteristic varieties with an arbitrary number of translated components (Theorem \ref{mainthm}). This is an extension for Theorem 4.1 in \cite{Z}.

\section{Characteristic and Resonance varieties} 

Let M be a smooth, irreducible, quasi-projective complex variety. The character variety of $M$ is defined by 
$$\mathbb{T}(M) := Hom (H_1(M), \mathbb{C}^*).$$
 This is an algebraic group whose identity irreducible component $\mathbb{T}(M)_1$ is an algebraic torus $(\mathbb{C}^{*})^{b_1(M)}$. Consider the exponential mapping
\begin{align}\label{exp}
\exp : H^1(M, \mathbb{C}) \to H^1(M, \mathbb{C}^{*}) = \mathbb{T}(M)
\end{align}
induced by the usual exponential function $\exp : \mathbb{C}\to \mathbb{C}^{*}$. Clearly
$$\exp (H^1(M, \mathbb{C})) = \mathbb{T}(M)_1.$$

The {\it characteristic varieties} of $M$ are the jumping loci for the first cohomology of $M$, with coefficients in rank one local systems:
$$\mathcal{V}^i_k(M)=\{\rho\in \mathbb{T}(M) : \dim H^i(M, \mathcal{L}_{\rho})\geq k\}.$$
When $i = 1$, we use the simpler notation $\mathcal{V}_k(M) = \mathcal{V}^1_k (M)$.

Fundamental results on the structure of the cohomology support loci for local
systems on quasi-projective algebraic varieties were obtained by Beauville \cite{B}, Green
and Lazarsfeld \cite{GL}, Simpson \cite{S} (for the proper case), and Arapura \cite{A} (for the
quasi-projective case and first characteristic varieties $\mathcal{V}_1(M)$).

\begin{theorem}
The strictly positive dimensional irreducible components of the first
characteristic variety $\mathcal{V}_1(M)$ are translated subtori in $\mathbb{T}(M)$ by elements of finite
order. When $M$ is proper, all the components of $\mathcal{V}^i_k(M)$ are translated subtori
in $\mathbb{T}(M)$ by elements of finite order.
\end{theorem}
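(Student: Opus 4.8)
The plan is to follow the now-classical strategy of Beauville, Green--Lazarsfeld, Simpson and Arapura; it suffices to explain the case of $\mathcal{V}_1(M)$ for $M$ quasi-projective, since the proper case for the higher $\mathcal{V}^i_k(M)$ is handled in the same spirit by Hodge theory on compact Kähler manifolds (this is Simpson's contribution). The two assertions to establish are: (a) a strictly positive dimensional component is a translate of a subtorus in $\mathbb{T}(M)$; and (b) the translating character has finite order.

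For (a) I would argue in two directions. First, in the \emph{easy direction}, a surjective holomorphic map $f\colon M\to C$ with connected fibers onto a smooth curve gives rise to translated subtori in $\mathcal{V}_1(M)$: equipping $C$ with the orbifold structure that records the multiple fibers of $f$, any rank one local system on $C$ of the matching orbifold type has $H^1\neq 0$ whenever the orbifold Euler characteristic is negative, and pulling back along $f$ yields an inclusion $\rho_0\cdot f^*\mathbb{T}(C)\subseteq \mathcal{V}_1(M)$, where $\rho_0$ is a finite-order character coming from the orbifold points and $f^*\mathbb{T}(C)$ is a subtorus of $\mathbb{T}(M)$. The substantial direction is the converse: given a strictly positive dimensional component $Z$, I would pick a unitary point $\rho\in Z$, use Deligne's mixed Hodge structure on $H^1(M,\mathcal{L}_\rho)$ together with the deformation theory of cohomology jump loci to identify the tangent cone to $\mathcal{V}_1(M)$ at $\rho$ with a resonance-type variety (the degeneracy locus of cup product on $H^1$), and then invoke the isotropic subspace theorem --- Castelnuovo--de Franchis in its classical guise, and its twisted, quasi-projective refinement in Arapura's work --- to produce from a positive dimensional degeneracy locus an admissible map $f\colon M\to C$ onto a curve accounting for the jump. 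One then identifies $Z=\rho_0\cdot f^*\mathbb{T}(C)$, a translate of a subtorus by the easy direction.

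For (b), I would use that $\mathcal{V}^i_k(M)$ is cut out inside $\mathbb{T}(M)$ by the vanishing of minors of the Alexander (Fox derivative) matrix, whose entries lie in $\mathbb{Z}[t_1^{\pm 1},\dots,t_n^{\pm 1}]$; hence $\mathcal{V}_1(M)$ is defined over $\mathbb{Q}$ and its finite collection of positive dimensional components is stable under $\mathrm{Gal}(\overline{\mathbb{Q}}/\mathbb{Q})$. Combining this with Simpson's observation that these loci are preserved by the natural $\mathbb{C}^*$-action on the moduli space of rank one Higgs bundles, one concludes that a component which is simultaneously a Galois-stable and $\mathbb{C}^*$-stable coset of a subtorus must be a coset by a point of finite order. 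In the proper case the corresponding statement for all $\mathcal{V}^i_k(M)$ follows the same way from Simpson's structure theorem for cohomology support loci on compact Kähler manifolds.

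The hard part is step (a), specifically the passage from the linear-algebraic fact that cup product on $H^1$ degenerates along a positive dimensional locus to the geometric conclusion that there is a genuine fibration over a curve responsible for it. In the untwisted compact case this is just the Castelnuovo--de Franchis lemma, but the twisted and quasi-projective version requires the full apparatus of admissible maps to orbifold curves and a delicate choice of smooth compactification, which is exactly the technical core of Arapura's theorem; the torsion statement in (b) is the other delicate point, relying on the arithmetic and Hodge-theoretic rigidity just mentioned.
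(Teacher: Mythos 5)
This theorem is not proved in the paper at all: it is quoted as a known structure result, with the proof delegated to the cited works of Beauville, Green--Lazarsfeld, Simpson (proper case) and Arapura (quasi-projective case, $\mathcal{V}_1$). Your outline faithfully reproduces the architecture of those proofs --- tangent cone/resonance identification at a unitary point, the isotropic subspace (Castelnuovo--de Franchis) theorem producing an orbifold fibration $f\colon M\to C$, and Simpson's Betti/de Rham/Dolbeault rigidity for the torsion statement --- and is correct as a sketch, with the genuinely hard steps honestly flagged rather than hidden. One small remark: your step (b) as written (integrality of the Alexander ideals plus the $\mathbb{C}^*$-action on the Dolbeault moduli space) is really the argument for the \emph{proper} case; for quasi-projective $M$ the Dolbeault side is not directly available, and the finite order of the translating character for a positive dimensional component of $\mathcal{V}_1(M)$ is instead extracted from your step (a), where $\rho_0$ arises from the multiple fibers of the orbifold pencil and is therefore automatically of finite order --- which is exactly how the torsion character $\rho$ appears in Theorem \ref{thm1} of the paper. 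Note also that Galois stability alone cannot give torsion (a coset translated by $2\in\mathbb{C}^*$ is Galois-stable), so the $\mathbb{C}^*$-action, or the orbifold description, is genuinely needed wherever you invoke finiteness of order.
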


The strictly positive dimensional irreducible
components of the first characteristic variety $\mathcal{V}_1(M)$ are described as follows.

\begin{theorem}\label{thm1} (\cite{B}, \cite{A})
Let $W$ be a $d-$dimensional irreducible component of $\mathcal{V}_1(M), d>0.$ Then, there is a regular morphism $f: M\to S$ onto a smooth curve $S$ with $b_1(S)=d$ such that the generic fiber $F$ of $f$ is connected, and a torsion character $\rho\in \mathbb{T}(M)$ such that the composition 
$$\pi_1(F)\to \pi_1(M)\to \mathbb{C}^*$$
is trivial and $W= \rho \cdot f^*(\mathbb{T}(S)).$
\end{theorem}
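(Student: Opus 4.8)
The plan is to combine the theorem just stated — that every strictly positive-dimensional component $W$ of $\mathcal{V}_1(M)$ is a translate $\rho_0\cdot T$ of a subtorus $T\subseteq\mathbb{T}(M)_1$ by a finite-order character $\rho_0$ — with the mixed Hodge theory of $M$ and a Castelnuovo–de Franchis type argument. Since $\mathbb{T}(M)_1=\exp H^1(M,\mathbb{C})$ via the exponential map \eqref{exp}, the $d$-dimensional subtorus $T$ is the exponential of a $d$-dimensional $\mathbb{C}$-subspace $V\subseteq H^1(M,\mathbb{C})$ that is defined over $\mathbb{Q}$. The goal is to show that $V=f^*H^1(S,\mathbb{C})$ for a regular map $f\colon M\to S$ onto a smooth curve with connected generic fibre, and then to translate this linear statement back into the multiplicative statement $W=\rho\cdot f^*(\mathbb{T}(S))$. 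Because $\rho_0$ has finite order, I would first pass to the finite cyclic cover of $M$ on which $\rho_0$ becomes trivial, reducing the construction of the fibration to the untwisted character and descending $f$ by equivariance at the end.

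Next I would bring in Deligne's mixed Hodge structure on $H^1(M,\mathbb{C})$. Choosing a smooth compactification $\overline{M}\supseteq M$ whose boundary $D=\overline{M}\setminus M$ is a normal crossing divisor, one has $F^1H^1(M,\mathbb{C})=H^0(\overline{M},\Omega^1_{\overline{M}}(\log D))$, the space of logarithmic $1$-forms. The tangent cone to $\mathcal{V}_1(M)$ at the trivial character is controlled by the resonance variety, i.e.\ by the jumping of the Aomoto-type complex $(H^\bullet(M,\mathbb{C}),\,\omega\wedge{-})$. The key structural fact I must establish is that the subspace $V$ is \emph{isotropic} for the cup product $H^1(M,\mathbb{C})\wedge H^1(M,\mathbb{C})\to H^2(M,\mathbb{C})$: positivity of $\dim W$ forces $\omega\wedge\eta=0$ in $H^2(M,\mathbb{C})$ for all $\omega,\eta\in V$, and the weight-one Hodge part of $V$ therefore consists of closed logarithmic $1$-forms that are pairwise wedge-trivial.

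With isotropy in hand, the heart of the argument is the logarithmic, open version of the Castelnuovo–de Franchis theorem (in the form of Catanese and Bauer used by Arapura): a positive-dimensional isotropic subspace of $H^0(\overline{M},\Omega^1_{\overline{M}}(\log D))$ is the pullback of the logarithmic $1$-forms on a smooth curve $S$ along a surjective morphism $f\colon M\to S$ whose generic fibre $F$ is connected and with $b_1(S)=\dim V=d$. This is the step I expect to be the main obstacle: one must isolate the weight-one logarithmic part, show that the abstract classes in $V$ are genuinely represented by pullback forms, reduce an a priori orbifold base to an honest smooth curve, and separately treat the low-dimensional case $d=1$, where $S$ is a punctured curve such as $\mathbb{C}^*$ and a single logarithmic form yields a map whose general fibre is made connected by Stein factorization.

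Finally I would close the loop. The easy inclusion $f^*\mathbb{T}(S)\subseteq\mathcal{V}_1(M)$ follows from the Leray spectral sequence and the projection formula: for nontrivial $\sigma\in\mathbb{T}(S)$ one gets an injection $H^1(S,\mathcal{L}_\sigma)\hookrightarrow H^1(M,\mathcal{L}_{f^*\sigma})$ with $H^1(S,\mathcal{L}_\sigma)\neq 0$ since $b_1(S)=d>0$. Translating by the finite-order character $\rho:=\rho_0$ gives $\rho\cdot f^*\mathbb{T}(S)\subseteq W$, and since both sides are irreducible of the same dimension $d$, they coincide, yielding $W=\rho\cdot f^*(\mathbb{T}(S))$. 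Connectedness of $F$ makes $f_*\colon\pi_1(M)\to\pi_1(S)$ surjective, so each pulled-back character $f^*\sigma$ restricts trivially to $\pi_1(F)$; the torsion translate $\rho$ arises precisely from the multiple fibres of the pencil (the orbifold structure of the base) and is supported there, hence also restricts trivially to the generic fibre. Together these give the triviality of the composition $\pi_1(F)\to\pi_1(M)\to\mathbb{C}^*$, completing the proof.
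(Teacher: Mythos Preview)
The paper does not prove this theorem at all: it is stated as a background result, attributed to Beauville and Arapura via the citations \cite{B} and \cite{A}, and used as a black box in the subsequent computation of $\mathcal{V}_1(M)$. There is therefore no ``paper's own proof'' against which to compare your attempt.

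That said, your outline follows the broad strategy of Arapura's original argument (translated subtorus $\Rightarrow$ isotropic subspace of logarithmic $1$-forms $\Rightarrow$ Castelnuovo--de Franchis $\Rightarrow$ map to a curve), but there are two soft spots you should be aware of. First, the isotropy of the tangent space $V$ to $W$ is asserted rather than derived: you say ``positivity of $\dim W$ forces $\omega\wedge\eta=0$,'' but this is exactly the nontrivial input --- it requires Hodge theory for unitary (or more generally rank-one) local systems, not just the untwisted mixed Hodge structure on $H^1(M,\mathbb{C})$, and the reduction via the finite cover must be carried out carefully so that the relevant cohomology jump is visible at the trivial character upstairs. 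Second, your final paragraph claims that the torsion translate $\rho$ ``is supported on multiple fibres'' and therefore restricts trivially to the generic fibre; this is the conclusion you want, not an input, and in Arapura's argument the triviality of $\rho$ on $\pi_1(F)$ is obtained by construction (the fibration is produced so that the whole component $W$, including $\rho$, consists of characters pulled back from an orbifold structure on $S$). As a sketch your proposal is in the right neighbourhood, but these two steps are where the actual work lies.
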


\begin{remark}\label{rm1}
{\rm
 When $M$ is a hypersurface complement in $\mathbb{P}^n$, the curve $S$ in Theorem \ref{thm1} above is obtained
from $\mathbb{C}$ by deleting $d$ points, see \cite{D4}, Theorem 1.11.
}
\end{remark}

If we fix a regular mapping $f : M \to S$ as above, the number of irreducible components $W = \rho \cdot f^*(\mathbb{T}(S))$ obtained by varying the torsion character $\rho$ is given by the following.

\begin{theorem}\label{thm2} (\cite{D3})
For a given regular map $f : M \to S$ as above, the associated
irreducible components $W= \rho \cdot f^*(\mathbb{T}(S))$ are parametrized
by the Pontrjagin dual $\hat{T}(f) = Hom(T(f);\mathbb{C}^*)$ of the finite abelian group
$$T(f) = \frac{ker \{f^* : H_1(M) \to H_1(S)\}}{im \{i^* : H_1(F)\to H_1(M)\}}$$
if $\chi(S) < 0$ and by the non-trivial elements of this Pontrjagin dual $\hat{T}(f)$ if $\chi(S)=0$.
\end{theorem}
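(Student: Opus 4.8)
The plan is to translate the statement into linear algebra over the homology groups and then to read off the count of components from the short exact sequence defining $T(f)$, using Theorem~\ref{thm1} to describe which tori occur.

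First I would record the two basic maps. Since $f\colon M\to S$ is surjective with connected generic fiber, $f_*\colon\pi_1(M)\to\pi_1(S)$ is surjective: restrict $f$ over a Zariski-dense open $U\subseteq S$ over which it is a locally trivial fibration, use the homotopy exact sequence $\pi_1(F)\to\pi_1(f^{-1}(U))\to\pi_1(U)\to 1$, and note that $\pi_1(f^{-1}(U))\to\pi_1(M)$ and $\pi_1(U)\to\pi_1(S)$ are onto because the complements are closed of complex codimension $\geq 1$. Hence $f_*\colon H_1(M)\to H_1(S)$ is onto, and $H_1(S)$ is free abelian of rank $d$ because $S$ is a smooth curve. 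As $F$ is a fiber, $f\circ i$ is constant, so $\operatorname{im}(i_*)\subseteq\ker(f_*)$ and there is a short exact sequence $0\to T(f)\to H_1(M)/\operatorname{im}(i_*)\to H_1(S)\to 0$, which splits since $H_1(S)$ is free. Applying $\Hom(-,\mathbb{C}^*)$ (exact because $\mathbb{C}^*$ is divisible) yields a split exact sequence of abelian groups
$$1\to\mathbb{T}(S)\xrightarrow{f^*}\Hom\bigl(H_1(M)/\operatorname{im}(i_*),\mathbb{C}^*\bigr)\to\hat{T}(f)\to 1.$$

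Next I would match this with the admissible characters of Theorem~\ref{thm1}. For $\rho\in\mathbb{T}(M)$ the composition $\pi_1(F)\to\pi_1(M)\xrightarrow{\rho}\mathbb{C}^*$ is trivial precisely when $\rho$ kills $\operatorname{im}(i_*)$, i.e.\ $\rho\in\Hom(H_1(M)/\operatorname{im}(i_*),\mathbb{C}^*)$; and two translated subtori coincide, $\rho\cdot f^*(\mathbb{T}(S))=\rho'\cdot f^*(\mathbb{T}(S))$, iff $\rho(\rho')^{-1}\in f^*(\mathbb{T}(S))$. Thus the distinct components associated to $f$ are indexed by the image of the torsion characters killing $\operatorname{im}(i_*)$ in the quotient $\hat{T}(f)$ of the displayed sequence. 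Under the splitting $H_1(M)/\operatorname{im}(i_*)\cong T(f)\oplus\mathbb{Z}^d$ that group of torsion characters is $\hat{T}(f)\times(\mathbb{Q}/\mathbb{Z})^d$, the torsion part of $f^*(\mathbb{T}(S))$ is $\{1\}\times(\mathbb{Q}/\mathbb{Z})^d$, and the quotient is $\hat{T}(f)$, each class being realized by an honest torsion character. When $\chi(S)<0$ the torus $f^*(\mathbb{T}(S))$ is itself a component of $\mathcal{V}_1(M)$ (since $H^1(S,\mathcal{L}_\chi)\neq 0$ for generic $\chi$) and, by Theorem~\ref{thm1} together with Arapura's theorem, so is each translate by an admissible torsion $\rho$; this yields the bijection with $\hat{T}(f)$.

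Finally I would isolate the case $\chi(S)=0$, where $S=\mathbb{C}^*$ or $S$ is an elliptic curve, so $H^1(S,\mathcal{L}_\chi)=0$ for every nontrivial $\chi$. Then for generic $\rho\in f^*(\mathbb{T}(S))$ the Leray spectral sequence of $f$, together with the projection formula and connectedness of the generic fiber, forces $H^1(M,\mathcal{L}_\rho)=0$, so $f^*(\mathbb{T}(S))$ lies in $\mathcal{V}_1(M)$ only on a proper subset and is not a component; hence the class of $\rho\in f^*(\mathbb{T}(S))$ — the identity of $\hat{T}(f)$ — is excluded, and the remaining (nontrivial) classes of $\hat{T}(f)$ index the components. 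I expect the main obstacle to be exactly this last point, and more broadly the maximality of the translated tori as irreducible components: when $\chi(S)=0$ the positive contribution to $H^1(M,\mathcal{L}_\rho)$ at a nontrivial torsion $\rho$ cannot come from pulling back cohomology of $S$ and must instead be produced by the multiple fibers of $f$ (passage to the orbifold base of the pencil). Making this precise, and checking that no identifications occur beyond those already accounted for, seems to require Arapura's structure theorem in its orbifold/admissible-map form rather than the purely homological bookkeeping of the earlier steps.
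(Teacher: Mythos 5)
This statement is quoted verbatim from Dimca \cite{D3}; the paper supplies no proof of it, so there is no internal argument to compare yours against and I can only assess the plan on its own terms. The homological bookkeeping you set up is correct and is essentially how the parametrization is organized in \cite{D3}: surjectivity of $f_*$ on $\pi_1$ from connectedness of the generic fiber, the split exact sequence $0\to T(f)\to H_1(M)/\operatorname{im}(i_*)\to H_1(S)\to 0$, exactness of $\Hom(-,\mathbb{C}^*)$, the identification of admissible characters with those killing $\operatorname{im}(i_*)$, and the observation that two translates coincide iff they differ by an element of $f^*(\mathbb{T}(S))$, so that the candidate components are the cosets forming $\hat{T}(f)$. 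The exclusion of the trivial class when $\chi(S)=0$, via the vanishing of $H^1(S,\mathcal{L}_\chi)$ for nontrivial $\chi$ on $\mathbb{C}^*$ or an elliptic curve, is also the right mechanism.

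The genuine gap is the one you flag yourself, and it is the substantive half of the theorem: nothing in the plan shows that for a nontrivial admissible torsion $\rho$ the translate $\rho\cdot f^*(\mathbb{T}(S))$ is actually contained in $\mathcal{V}_1(M)$, i.e.\ that $H^1(M,\mathcal{L})\neq 0$ for generic $\mathcal{L}$ in that translate. Invoking Theorem \ref{thm1} here is circular, since that theorem only asserts the converse inclusion (every positive-dimensional component arises from some such $f$ and $\rho$); it does not say that every admissible pair $(f,\rho)$ produces a component. In \cite{D3} this step is carried out by pushing $\mathcal{L}$ forward to $S$: $R^0f_*\mathcal{L}$ is a constructible sheaf on $S$, generically a rank-one local system, whose stalk at a bifurcation point $c$ drops exactly when $\rho$ is nontrivial on the class of the multiple fiber over $c$, and an Euler-characteristic count then yields the required nonvanishing of $H^1$ --- this is precisely where the multiplicities $m_c$ of Theorem \ref{thm5} enter. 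So your closing remark that the contribution ``must be produced by the multiple fibers'' correctly names the missing ingredient, but as written it is a pointer to the argument rather than the argument. A smaller omission of the same kind: the finiteness of $T(f)$ is used (you need $\Hom(T(f),\mathbb{C}^*)$ to consist of torsion characters for your coset count) but not established; it too falls out of the same analysis of the degenerate fibers, as recorded in Theorem \ref{thm5}.
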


The group $T(f)$ is determined as follows.

\begin{theorem}(\cite{D3}) \label{thm5}
Let $S$ is a non-proper smooth curve and $f: M\to S$ be a regular function. Then the group $T(f)$ is computed by the following
$$T(f)= \oplus_{c\in C(h)}\mathbb{Z}/m_c\mathbb{Z},$$
where $m_c$ is the multiplicity of the divisor $f^{-1}(c)$ and $C(f)$ is the set of bifurcation values of $f$.
\end{theorem}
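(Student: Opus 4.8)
The plan is to split the base as $S=S'\cup\bigcup_{i}\Delta_i$, where $S'=S\setminus C(f)$, $C(f)=\{c_1,\dots,c_r\}$ is the (finite) bifurcation set, and the $\Delta_i$ are disjoint small disks about the $c_i$; to compute $H_1(M)$ by gluing $M'=f^{-1}(S')$, over which $f$ is a fibration, to the neighbourhoods $N_i=f^{-1}(\Delta_i)$ of the atypical fibers $D_i=f^{-1}(c_i)$; and then to read off $T(f)$ by a snake-lemma chase, all of whose torsion will come from the local geometry of the $D_i$. (I write the homology push-forwards as $f_*$ and $i_*$.) Since $S$ is non-proper it is homotopy equivalent to a wedge of circles, so $H_2(S)=H_2(S')=0$ and $H_1(S),H_1(S')$ are free; the homology exact sequence of the pair $(S,S')$ with excision, $H_*(S,S')=\bigoplus_i H_*(\Delta_i,\Delta_i^*)$, gives $0\to\bigoplus_{i=1}^{r}\bZ\langle\partial_i\rangle\to H_1(S')\to H_1(S)\to 0$, where $\partial_i$ is the meridian of $c_i$. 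On $M'$ the map $f$ is, by definition of the bifurcation set, a locally trivial fibration with connected generic fiber $F$; the five-term exact sequence of this fibration together with $H_2(S')=0$ gives $0\to H_1(F)_{\pi_1(S')}\to H_1(M')\xrightarrow{f_*}H_1(S')\to 0$, in which the coinvariant group $H_1(F)_{\pi_1(S')}$ is $\Im\big(i_*:H_1(F)\to H_1(M')\big)=\Ker\big(f_*:H_1(M')\to H_1(S')\big)$.

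Next I would run Mayer--Vietoris on $M=M'\cup\bigcup_i N_i$, with $M'\cap N_i=N_i^*=f^{-1}(\Delta_i^*)\subset M'$. The key point is that $D_i$ has real codimension two in the smooth surface $N_i$, so $\pi_1(N_i^*)\to\pi_1(N_i)$, hence $H_1(N_i^*)\to H_1(N_i)$, is surjective; combined with Mayer--Vietoris this shows that $H_1(M')\to H_1(M)$ is surjective and that $\Ker\big(H_1(M')\to H_1(M)\big)=\sum_i j_{i*}\Ker\big(H_1(N_i^*)\to H_1(N_i)\big)$, with $j_i:N_i^*\hookrightarrow M'$. It follows that $f_*:H_1(M)\to H_1(S)$ is surjective, that $\Im\big(i_*:H_1(F)\to H_1(M)\big)$ equals the image of $\Ker\big(f_*:H_1(M')\to H_1(S')\big)$ and sits inside $\Ker\big(f_*:H_1(M)\to H_1(S)\big)$, and that the snake lemma applied to the induced morphism of short exact sequences $\big(0\to\Ker(f_*)\to H_1(M')\to H_1(S')\to 0\big)\longrightarrow\big(0\to\Ker(f_*)\to H_1(M)\to H_1(S)\to 0\big)$, whose middle and right cokernels vanish, collapses to an isomorphism
\[
T(f)\;\cong\;\operatorname{coker}\!\Big(\Ker\big(H_1(M')\to H_1(M)\big)\;\xrightarrow{\ f_*\ }\;\bigoplus_{i=1}^{r}\bZ\langle\partial_i\rangle\Big).
\]

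It then remains to identify this cokernel, which is a local computation near each $c_i$ and splits as a direct sum over $i$ because the $\Delta_i$ are disjoint. Writing $D_i=\sum_j a_{ij}D_{ij}$ as a divisor on the smooth surface $N_i$, with the $D_{ij}$ the distinct reduced components, the two decisive facts are: (a) $\Ker\big(H_1(N_i^*)\to H_1(N_i)\big)$ is generated by the meridians $\mu_{ij}$ of the $D_{ij}$ --- the standard description of the first homology of the complement of a divisor in a smooth variety; and (b) $f_*\mu_{ij}=a_{ij}\,\partial_i$, since $D_{ij}$ occurs in the fiber divisor $f^{-1}(c_i)$ with multiplicity $a_{ij}$. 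Thus the image of $f_*$ above is the direct sum over $i$ of $\gcd_j(a_{ij})\,\bZ\langle\partial_i\rangle$, and therefore $T(f)\cong\bigoplus_{i=1}^{r}\bZ/m_{c_i}\bZ$ with $m_{c_i}=\gcd_j(a_{ij})$ the multiplicity of the divisor $f^{-1}(c_i)$, as asserted. The step demanding the most care is (a): showing that deleting a possibly reducible, singular, non-reduced atypical fiber $D_i$ kills, in $H_1(N_i)$, nothing beyond the span of its meridians (this is the Gysin sequence for a smooth divisor, but the general case needs a stratification or an embedded resolution). The only other points requiring attention are the mild connectivity checks feeding the Mayer--Vietoris step --- automatic once $F$ is connected, since $M'$, $N_i^*$ and $N_i$ are then connected --- and the reduction to this case via the Stein factorization of $f$ when the generic fiber is disconnected.
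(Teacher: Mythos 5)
The paper does not prove this statement---it is quoted from \cite{D3} without argument---so there is no internal proof to compare yours against; judged on its own terms, your argument is essentially correct and is, in substance, the standard proof of this result. The three pillars all hold up: (i) the identification $\Im\big(i_*:H_1(F)\to H_1(M')\big)=\Ker\big(f_*:H_1(M')\to H_1(S')\big)$ via the five-term sequence, which uses $H_2(S')=0$ and is where the non-properness of $S$ genuinely enters (for proper $S$ the class $\sum_i\partial_i$ would die in $H_1(S')$ and the left-hand term of your first sequence would not inject); (ii) the Mayer--Vietoris reduction of $\Ker\big(H_1(M')\to H_1(M)\big)$ to the local kernels $\Ker\big(H_1(N_i^*)\to H_1(N_i)\big)$; and (iii) the snake-lemma identification of $T(f)$ with $\operatorname{coker}\big(\Ker(H_1(M')\to H_1(M))\to\bigoplus_i\mathbb{Z}\langle\partial_i\rangle\big)$, which I have checked and which, combined with your facts (a) and (b), yields $\bigoplus_i\mathbb{Z}/\gcd_j(a_{ij})\mathbb{Z}=\bigoplus_i\mathbb{Z}/m_{c_i}\mathbb{Z}$ as required. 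Two small corrections. First, $N_i=f^{-1}(\Delta_i)$ is a smooth variety of the same dimension as $M$, not a ``surface'': in this theorem $M$ is an arbitrary smooth irreducible quasi-projective variety. This is harmless, since both (a) and (b) hold for divisors in smooth varieties of any dimension. Second, for (a) in the singular or non-reduced case no embedded resolution is needed: the singular locus $\Sigma$ of $(D_i)_{\mathrm{red}}$ is contained in $D_i$, hence disjoint from $N_i^*$, and has complex codimension at least two in $N_i$, so deleting it changes neither $H_1(N_i)$ nor $H_1(N_i^*)$ nor the meridians, and one concludes by the Gysin sequence for the smooth divisor $D_i\setminus\Sigma$ in $N_i\setminus\Sigma$. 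Finally, your closing remark about Stein factorization is not needed here, since $T(f)$ is only defined (in Theorem \ref{thm2}) for maps with connected generic fiber.
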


The (first) {\it resonance varieties} of $M$ are the jumping loci for the first
cohomology of the complex $H^*(H^*(M, \mathbb{C}), \alpha \wedge),$ namely
$$\mathcal{R}_k(M)=\{\alpha\in H^1(M, \mathbb{C}): \dim H^1(H^*(M, \mathbb{C}), \alpha \wedge)\geq k\}.$$

The relation between the resonance and characteristic varieties can be
summarized as follows, see \cite{D5}.

\begin{theorem}\label{thm3}
Assume that $M$ is any hypersurface complement in $\mathbb{P}^n$.
Then the irreducible components $E$ of the resonance variety $\mathcal{R}_1(M)$ are linear
subspaces in $H^1(M, \mathbb{C})$ and the exponential mapping (\ref{exp}) sends these
irreducible components $E$ onto the irreducible components $W$ of $\mathcal{R}_1(M)$ with
$1\in W$.
\end{theorem}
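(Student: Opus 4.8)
The plan is to derive Theorem~\ref{thm3} by feeding Arapura's structure theorem for $\mathcal V_1(M)$ (Theorem~\ref{thm1}, refined by the fact recalled at the opening of this section that the positive-dimensional components of $\mathcal V_1(M)$ are torsion translates of subtori) into the ``tangent cone'' principle, which for a $1$-formal space identifies, via the exponential map~(\ref{exp}), the germ of $\mathcal R_1(M)$ at $0$ with the germ of $\mathcal V_1(M)$ at $1$. The conclusion should be read as: $\exp$ sends the irreducible components $E$ of $\mathcal R_1(M)$ onto the irreducible components $W$ of $\mathcal V_1(M)$ containing $1$.

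First I would record that a hypersurface complement $M=\mathbb P^n\setminus V$ is $1$-formal. Writing $V=V_1\cup\dots\cup V_r$ with $\deg V_i=d_i$, the homology $H_1(M,\mathbb Z)$ is generated by the meridians of the $V_i$ subject to the single relation $\sum d_i\gamma_i=0$, so the Gysin sequence presents $H^1(M,\mathbb Q)$ as $\{(a_1,\dots,a_r)\in\mathbb Q^r:\sum a_id_i=0\}$ with a Hodge structure pure of type $(1,1)$; in particular $W_1H^1(M,\mathbb Q)=0$. By Morgan's mixed Hodge theory on the $1$-minimal model this purity forces the Malcev Lie algebra of $\pi_1(M)$ to be quadratic, i.e.\ $M$ is $1$-formal.

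The next ingredient is the tangent cone theorem of Libgober and Dimca--Papadima--Suciu, which is precisely the content summarized in \cite{D5}: since $M$ is $1$-formal, $\exp$ restricts to an isomorphism of analytic germs $(\mathcal R_1(M),0)\xrightarrow{\ \sim\ }(\mathcal V_1(M),1)$, so $\mathcal R_1(M)=\mathrm{TC}_1\,\mathcal V_1(M)$. It remains to do the bookkeeping. Being defined by homogeneous rank conditions on $\alpha\wedge\colon H^1(M,\mathbb C)\to H^2(M,\mathbb C)$, the variety $\mathcal R_1(M)$ is a cone with vertex $0$, hence recovered from its germ at $0$. On the other side, by Arapura's theorem the positive-dimensional irreducible components of $\mathcal V_1(M)$ are translates $\rho\cdot f^*(\mathbb T(S))$ of subtori by finite-order characters, and such a component contains $1$ exactly when $\rho\in f^*(\mathbb T(S))$, in which case it equals the honest subtorus $\mathbb T_W:=f^*(\mathbb T(S))$. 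The germ of $\mathbb T_W$ at $1$ is the germ at $0$ of its Lie algebra $L_W\subseteq H^1(M,\mathbb C)$, a linear subspace, and $\exp$ maps $L_W$ onto $\mathbb T_W$; components of $\mathcal V_1(M)$ avoiding $1$, as well as any zero-dimensional ones through $1$, contribute only $\{0\}$ near $1$. Combining, $\mathcal R_1(M)=\bigcup_{W\ni 1}L_W$: its irreducible components are the linear subspaces $L_W$, $\exp$ carries each $L_W$ onto $W$, and distinct $W$ give distinct $L_W$ since a subtorus is recovered from its Lie algebra, which is exactly the asserted correspondence.

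I expect the genuine obstacle to be the tangent cone theorem. One containment of germs — that $\exp^{-1}$ of $\mathcal V_1(M)$ near $1$ lies in $\mathcal R_1(M)$, in the exponential-tangent-cone sense — is soft and holds for an arbitrary space, but the reverse containment, together with the statement that the tangent cone is a finite union of \emph{honest} linear subspaces and not merely some conical set, fails for general spaces and rests essentially on the $1$-formality established in the first step; this is the only place where the hypothesis ``hypersurface complement in $\mathbb P^n$'' enters. The matching of irreducible components on the two sides is then purely formal, the correspondence $L_W\leftrightarrow\mathbb T_W$ between subtori and their Lie algebras being a bijection.
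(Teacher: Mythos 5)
This statement is quoted from \cite{D5} and the paper supplies no proof of its own, so there is nothing internal to compare against; judged on its own terms, your proposal is a correct reconstruction of the standard Dimca--Papadima--Suciu argument. The two pillars are right: (i) a hypersurface complement in $\mathbb{P}^n$ has $H^1$ pure of type $(1,1)$ (hence $W_1H^1=0$), which by Morgan's theory gives $1$-formality; (ii) the tangent cone theorem for $1$-formal spaces identifies $(\mathcal{R}_1(M),0)$ with $(\mathcal{V}_1(M),1)$ via $\exp$ and forces the components of $\mathcal{R}_1(M)$ to be linear; and the bookkeeping matching cones $L_W$ with the subtori $W\ni 1$ from Arapura's theorem is sound (you also correctly read the evident typo in the statement, where the second occurrence of $\mathcal{R}_1(M)$ should be $\mathcal{V}_1(M)$). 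Be aware, though, that step (ii) is not something you prove but precisely the main theorem of the reference \cite{D5} from which the statement is taken, so your ``proof'' is really a reduction of the quoted corollary to the quoted paper's main result; that is perfectly appropriate here, but the genuinely hard content (that the reverse germ inclusion holds and that the tangent cone is a finite union of honest linear subspaces under $1$-formality) remains outsourced, as you yourself acknowledge in your closing paragraph.
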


\section{The Characteristic varieties $\mathcal{V}_1(M)$}

Consider from now on the complement $M = \mathbb{C}^2\setminus C$, where $C =
C_0\cup C_1, C_0 =\{ g(x, y) =0\}$ and $C_1 = \{f(x, y)=0\}$.

By the same argument as in Section 3 in \cite{Z} we can prove the following.

\begin{theorem} The integral (co)homology of the surface $M$ is torsion
free and 
$$b_1(M) = 2, b_2(M) = s+t,$$
 where $s$ and $t$ are the numbers of roots of $q(x)$ and $p(x)q(x)$, respectively. Moreover, the cup-product
$$\cup : H^1(M)\times  H^1(M)\to H^2(M)$$
is non-trivial.
\end{theorem}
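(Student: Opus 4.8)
The plan is to follow the strategy of Section~3 of \cite{Z}, adapting it to the more general polynomials $p(x)$ and $q(x)$. First I would set up a concrete cell structure or, more conveniently, use a Mayer--Vietoris / long exact sequence argument. Write $C_0 = \{g=0\}$ and $C_1 = \{f=0\}$, where $g(x,y) = yq(x) - 1$ and $f(x,y) = F(x,y) - 1 = p(x)(yq(x)-1) - 1$. The curve $C_0$ is smooth and rational: for each $x$ with $q(x)\neq 0$ there is a unique $y = 1/q(x)$, so $C_0$ is isomorphic to $\mathbb{C}\setminus\{s \text{ roots of }q\}$, hence homotopy equivalent to a wedge of $s$ circles with $H_1(C_0) = \mathbb{Z}^{s}$ (here $s$ is the number of roots of $q$). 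Similarly $C_1 = \{p(x)(yq(x)-1) = 1\}$: solving for $y$ shows $C_1$ is isomorphic to $\{x : p(x)q(x)\neq 0\} = \mathbb{C}\setminus\{t\text{ roots of }pq\}$, so $H_1(C_1) = \mathbb{Z}^{t}$. Note $C_0\cap C_1 = \varnothing$ since $g=0$ forces $f = -1 \neq 0$.

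Next I would compute $H_*(M)$ from the pair $(\mathbb{C}^2, C)$ together with Alexander/Lefschetz duality, exactly as in \cite{Z}. Since $C = C_0 \sqcup C_1$ is a disjoint union of two smooth affine curves embedded in $\mathbb{C}^2$, one has a Thom--Gysin or tubular-neighborhood argument: $H_k(\mathbb{C}^2, \mathbb{C}^2\setminus C) \cong H_{k-2}^{BM}(C)$ (Borel--Moore homology shifted by the real codimension $2$). From the long exact sequence of the pair $(\mathbb{C}^2, M)$ and $H_*(\mathbb{C}^2) = H_*(\text{pt})$, one extracts $\widetilde H_k(M) \cong H_{k+1}(\mathbb{C}^2, M)$. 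Feeding in $H_*^{BM}(C_0)$ and $H_*^{BM}(C_1)$ for the two punctured affine lines gives, after the shift, the stated answer: $b_1(M) = 2$ (the two meridian classes around $C_0$ and $C_1$, dual to the two connected components of $C$) and $b_2(M) = s + t$ (one class for each puncture of each curve). Torsion-freeness follows because all the groups appearing — $H_*^{BM}$ of punctured lines, $H_*$ of $\mathbb{C}^2$ — are free, and the long exact sequences split as sequences of free abelian groups by a dimension count. I would be careful to identify the two generators of $H_1(M)$ concretely as small loops $\gamma_0, \gamma_1$ linking $C_0$ and $C_1$ respectively, since this identification is what makes the cup-product computation and the later characteristic-variety computation possible.

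For the non-triviality of the cup product $\cup\colon H^1(M)\times H^1(M)\to H^2(M)$, I would exhibit an explicit pair of $1$-forms (or $1$-cocycles) whose product is nonzero. The natural candidates are the logarithmic forms $\omega_0 = \frac{1}{2\pi i}\,d\log g = \frac{1}{2\pi i}\,\frac{dg}{g}$ and $\omega_1 = \frac{1}{2\pi i}\,d\log f$, which are dual to $\gamma_1$ and $\gamma_0$ respectively in $H^1(M,\mathbb{C})$. Since $g = yq(x)-1$ and $f = p(x)(yq(x)-1)-1$, and since $p(x), q(x)$ have a common root, one can pick a common root $a$ of $p$ and $q$; near a point of $M$ lying over $x = a$ the behavior of $f$ and $g$ is governed by the local factors, and the product $\omega_0\wedge\omega_1$ has a nonzero "residue-type" pairing against a suitable torus class in $H_2(M)$. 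Concretely I would argue as in \cite{Z}: the map $M \to (\mathbb{C}^*)^2$, $(x,y)\mapsto (f(x,y), g(x,y))$ — or rather the relevant coordinate map capturing both curves simultaneously — has image containing a real $2$-torus $T$ on which $\int_T \omega_0\wedge\omega_1 = \pm 1$. Alternatively, one can use that a common root of $p$ and $q$ produces a point in the closure where both curves "meet at infinity" in a linked way; this is precisely the generalized-Broughton phenomenon, and it forces $\omega_0\wedge\omega_1 \neq 0$ in $H^2(M)$.

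The main obstacle I expect is the second part — the cup-product nontriviality — and more specifically producing the right $2$-cycle pairing nontrivially with $\omega_0\wedge\omega_1$. Computing $b_1$ and $b_2$ and torsion-freeness is a fairly mechanical homological calculation once the geometry of $C_0$ and $C_1$ (two disjoint punctured affine lines) is pinned down, and it is essentially verbatim from \cite{Z} Section~3. But the cup product depends delicately on the hypothesis that $p$ and $q$ share a root (and that $p+1, q$ do not): this is what ensures the two curves are "entangled at infinity" rather than unlinked. So the crux is to make that entanglement explicit — e.g.\ by choosing local coordinates near a common root $a$ of $p,q$ in a compactification, computing the local intersection/linking data of the total transforms of $C_0$ and $C_1$, and showing it contributes a nonzero term to the cup product. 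Once that is in hand, the theorem follows, and it sets up the computation of $\mathcal{V}_1(M)$ in the remainder of Section~3 via Theorems~\ref{thm1}, \ref{thm2}, and \ref{thm5}.
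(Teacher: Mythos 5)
Your plan is the same as the paper's, which gives no argument at all beyond deferring to Section~3 of \cite{Z}: identify $C_0$ and $C_1$ as disjoint smooth graphs over $\mathbb{C}$ minus $s$ (resp.\ $t$) points, run the Gysin/long-exact-sequence machinery, and exhibit a torus detecting the cup product. Two remarks. First, the duality you invoke is misstated: for a smooth closed curve $C\subset\mathbb{C}^2$, excision plus the Thom isomorphism for the oriented rank-two normal bundle give $H_k(\mathbb{C}^2,M)\cong H_{k-2}(C)$ with \emph{ordinary} homology (the Borel--Moore groups appear in the cohomological version $H^k(\mathbb{C}^2,M)\cong H^{BM}_{4-k}(C)$); taken literally, your formula yields $H_2(\mathbb{C}^2,M)\cong H_0^{BM}(C)=0$, hence $b_1(M)=0$, and a spurious $H_3(M)\cong H_2^{BM}(C)=\mathbb{Z}^2$, which is impossible for a smooth affine surface. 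With the corrected statement the long exact sequence of the pair gives $H_1(M)\cong H_0(C)=\mathbb{Z}^2$ and $H_2(M)\cong H_1(C)=\mathbb{Z}^{s+t}$, all free, as you intend. (Note also that the hypothesis on $p+1$ and $q$ is used already here: it prevents $C_1$ from containing a vertical line $x=a$ with $q(a)=0$, $p(a)=-1$, i.e.\ it is what makes $C_1$ a graph over $\{pq\neq 0\}$.) Second, the cup-product step you flag as the main obstacle closes exactly as you propose: let $a$ be a common root of $p,q$, put $k=\mathrm{ord}_a(q)\geq 1$, $l=\mathrm{ord}_a(p)\geq 1$, and take $T=\{|x-a|=\delta\}\times\{|y|=R\}$ with $R\delta^{k+l}\gg 1$. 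Then $g=yq(x)\bigl(1+o(1)\bigr)$ and $f=p(x)yq(x)\bigl(1+o(1)\bigr)$ on $T$, so $T\subset M$, the map $(f,g)\colon T\to(\mathbb{C}^*)^2$ acts on $H_1$ by the matrix $\left(\begin{smallmatrix} k+l & 1\\ k & 1\end{smallmatrix}\right)$, and $\int_T \frac{df}{f}\wedge\frac{dg}{g}=(2\pi i)^2\,l\neq 0$ --- not $\pm 1$ in general, as you wrote, but nonzero, which is all the theorem requires.
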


Using the definition of the resonance varieties we get the following.

\begin{corollary} The resonance varieties of $M$ are trivial, i.e. $\mathcal{R}_k(M) =
0$ for any $k > 0.$
\end{corollary}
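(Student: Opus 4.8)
The plan is to compute directly the first cohomology of the Aomoto complex $(H^*(M,\mathbb{C}),\alpha\wedge)$ for each $\alpha\in H^1(M,\mathbb{C})$, using only the two facts just established in the preceding Theorem: $b_1(M)=2$ and the non-triviality of the cup product $\cup\colon H^1(M)\times H^1(M)\to H^2(M)$. Since $M=\mathbb{C}^2\setminus C$ is connected, $H^0(M,\mathbb{C})=\mathbb{C}$ and $H^1(M,\mathbb{C})\cong\mathbb{C}^2$, so the only relevant part of the complex is
$$\mathbb{C}\xrightarrow{\ \alpha\ }H^1(M,\mathbb{C})\xrightarrow{\ \alpha\wedge\ }H^2(M,\mathbb{C}),$$
whose first cohomology is $\ker(\alpha\wedge\colon H^1(M,\mathbb{C})\to H^2(M,\mathbb{C}))$ modulo the image $\mathbb{C}\alpha$ of the first map.

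First I would treat $\alpha=0$ separately: both differentials vanish, so the $H^1$ of this complex is all of $H^1(M,\mathbb{C})$, of dimension $2$; thus $0$ lies in $\mathcal{R}_1(M)$ and $\mathcal{R}_2(M)$ but in no $\mathcal{R}_k(M)$ with $k\ge3$. For $\alpha\neq0$ the map $1\mapsto\alpha$ is injective with image the line $\mathbb{C}\alpha$, so it suffices to check $\ker(\alpha\wedge\colon H^1(M,\mathbb{C})\to H^2(M,\mathbb{C}))=\mathbb{C}\alpha$. Choosing a basis $e_1,e_2$ of $H^1(M,\mathbb{C})$, graded-commutativity gives $e_i\cup e_i=0$, so the non-triviality of the cup product is equivalent to $e_1\cup e_2\neq0$. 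Writing $\alpha=a_1e_1+a_2e_2$ and an arbitrary $\beta=b_1e_1+b_2e_2$, one computes $\alpha\wedge\beta=(a_1b_2-a_2b_1)\,e_1\cup e_2$, which vanishes precisely when $(b_1,b_2)$ is proportional to $(a_1,a_2)$, i.e.\ when $\beta\in\mathbb{C}\alpha$. Hence $\ker(\alpha\wedge)=\mathbb{C}\alpha$ and the first cohomology of the Aomoto complex is zero for every nonzero $\alpha$.

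It then follows that $\mathcal{R}_1(M)=\{0\}$, and since $\mathcal{R}_k(M)\subseteq\mathcal{R}_1(M)$ for every $k\ge1$, we obtain $\mathcal{R}_k(M)\subseteq\{0\}$ for all $k>0$ (more precisely $\mathcal{R}_1(M)=\mathcal{R}_2(M)=\{0\}$ and $\mathcal{R}_k(M)=\emptyset$ for $k\ge3$), so the resonance varieties carry no positive-dimensional components, as asserted. There is essentially no obstacle here: everything reduces to the structure of $H^*(M,\mathbb{C})$ recorded in the preceding Theorem. The only point worth keeping in mind is that $b_1(M)=2$ is exactly what makes ``the cup product is non-trivial'' suffice, since then for $\alpha\neq0$ the linear map $\beta\mapsto\alpha\wedge\beta$ on the $2$-dimensional space $H^1(M,\mathbb{C})$ is nonzero and hence has kernel exactly the line $\mathbb{C}\alpha$; for $b_1(M)>2$ one would need to know more about the cup product.
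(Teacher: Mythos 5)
Your computation is correct and is exactly the argument the paper intends: the Corollary is stated with only the remark ``Using the definition of the resonance varieties,'' and the content is precisely that $b_1(M)=2$ together with a non-trivial cup product forces $\ker(\alpha\wedge)=\mathbb{C}\alpha$ for every $\alpha\neq0$, so that $\mathcal{R}_1(M)$ reduces to the origin. Your added care about the case $\alpha=0$ and the reading of ``$\mathcal{R}_k(M)=0$'' as $\mathcal{R}_k(M)\subseteq\{0\}$ is consistent with the standard convention and with how the result is used later in the paper.
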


Since the resonance varieties are trivial, and $M$ is a hypersurface complement,
it follows from Theorem \ref{thm3} that the characteristic varieties $\mathcal{V}_1(M)$
can contain only isolated points and 1-dimensional translated components.
In this section we determine the latter ones.

In view of Theorem \ref{thm1} and Remark \ref{rm1}, any such component
comes from a mapping $h: M\to \mathbb{C}^*$. 
If we regard $h$ as a regular function on
the affine variety $M$, it follows that $h$ should have the form
$$h = \frac{P(x, y)}{f^m g^n}$$
for some polynomial $P$ and some positive integers $m, n$. If $P$ is not in the
multiplicative system spanned by $f$ and $g$, then $P$ vanishes at some point
of $M$ and this is a contradiction. It follows that we may assume that
$$h = f^m g^n$$
for some (positive or negative) integers $m, n$. Now, we are looking for all such mappings such that they have multiple fibers and connected generic fiber. 

\begin{lemma}\label{lmtinhlienthong}
For all integer numbers $m> 1, n>1$ and $c\in\mathbb{C}\setminus \{0\}$, then the generic fiber of the polynomial $f^m(x, y) + cg^n(x, y)$ is connected.
\end{lemma}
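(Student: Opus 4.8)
The plan is to prove the slightly stronger statement that the generic fibre is irreducible. Write $d=\deg p$; note $d\ge 1$ because $p$ and $q$ have a common root. First I would change coordinates: on $\{q(x)\ne 0\}$ the substitution $u=yq(x)$ is an isomorphism, and there $g=u-1$ and $f=p(x)(u-1)-1$, so
$$f^m+cg^n=\Phi\bigl(p(x),u\bigr),\qquad \Phi(s,u):=\bigl(s(u-1)-1\bigr)^m+c\,(u-1)^n.$$
For every constant $s_0$ the polynomial $\Phi(s_0,u)$ is non-constant in $u$ (as $c\ne 0$ and $n>1$), so the plane curve $D_\lambda:=\{(x,u):\Phi(p(x),u)=\lambda\}$ has no vertical component; together with the observation that $f^m+cg^n$ is constant along every line $x=x_0$ with $q(x_0)=0$, this shows that for all $\lambda$ outside a finite set the fibre $\{f^m+cg^n=\lambda\}$ equals the closure of a Zariski-dense open set which is isomorphic to a dense open set of $D_\lambda$. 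Hence it suffices to prove that $D_\lambda$ is irreducible for generic $\lambda$.

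Next, observe that $D_\lambda$ maps to $\Gamma_\lambda:=\{(s,u):\Phi(s,u)=\lambda\}$ via $(x,u)\mapsto(p(x),u)$; that is, $D_\lambda$ is the base change of $\Gamma_\lambda$ along $x\mapsto p(x)$. To handle $\Gamma_\lambda$ I would put $\tau=u-1$ and $w=s\tau-1$; since $\Phi(s,1)=(-1)^m$, the curve $\Gamma_\lambda$ lies in $\{\tau\ne 0\}$, where this substitution is an isomorphism, and it identifies $\Gamma_\lambda$ with a dense open subset of $\{w^m+c\tau^n=\lambda\}$, on which $s=(w+1)/\tau$. For $\lambda\ne 0$ the polynomial $\lambda-c\tau^n$ has simple roots, hence is not a $k$-th power in $\mathbb{C}(\tau)$ for any prime $k\mid m$, and $-4(\lambda-c\tau^n)$ is not a fourth power; by the classical irreducibility criterion for binomials, $w^m-(\lambda-c\tau^n)$ is irreducible over $\mathbb{C}(\tau)$. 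So $\Gamma_\lambda$ is irreducible, and its smooth projective model $\overline{\Gamma_\lambda}$ is that of $\{w^m+c\tau^n=\lambda\}$.

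The hard part is that a base change along $x\mapsto p(x)$ could a priori break an irreducible curve into several components; ruling this out is the crux. By standard field theory $D_\lambda$ is irreducible exactly when $p(X)-s$ is irreducible over $K:=\mathbb{C}(\Gamma_\lambda)$. The key point is a pole computation: on $\overline{\Gamma_\lambda}$ the function $s=(w+1)/\tau$ has a pole of order \emph{exactly one} at each of the finitely many points lying over $\tau=0$ (there $w$ is an $m$-th root of $\lambda$, which for generic $\lambda$ is neither $0$ nor $-1$, and $\tau$ is a local uniformiser). Suppose $p(X)-s$ had a monic irreducible factor $g_1\in K[X]$ with $r:=\deg g_1<d$. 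It would define a smooth projective curve $\widetilde\Gamma$, a degree-$r$ morphism $\pi\colon\widetilde\Gamma\to\overline{\Gamma_\lambda}$, and a function $\xi\in\mathbb{C}(\widetilde\Gamma)$ with $p(\xi)=s$. Since the leading term of $p$ controls the poles of $p(\xi)$, every coefficient of the polar divisor of $s$ on $\widetilde\Gamma$ is divisible by $d$; but that polar divisor equals $\pi^{*}$ of the polar divisor of $s$ on $\overline{\Gamma_\lambda}$, which has a coefficient equal to $1$ at some point $P_0$ over $\tau=0$. Hence $d$ divides every ramification index $e_Q$ over $P_0$, and so $d$ divides $\sum_{\pi(Q)=P_0}e_Q=\deg\pi=r$. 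Since $1\le r<d$, this is impossible. Therefore $p(X)-s$ is irreducible over $K$, so $D_\lambda$ --- and hence the generic fibre of $f^m+cg^n$ --- is irreducible, in particular connected. (If $d=1$, the map $x\mapsto p(x)$ is an isomorphism and this last step is unnecessary.)
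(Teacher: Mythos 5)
Your proof is correct, but after the first step it diverges completely from the paper's. Both arguments begin with the same birational substitution (you use $u=yq(x)$, the paper uses $v=g(x,y)$), reducing the problem to the two-variable polynomial $(sv-1)^m+cv^n$ precomposed with $s=p(x)$. From there the paper quotes the Stein-factorization criterion of Lemma \ref{lmtieuchuan}: the generic fibre is disconnected iff the polynomial is a composite $H\circ Q$ with $\deg H>1$; such a composite has a positive-dimensional singular locus, whereas a direct computation shows $h(u,v)=(p(u)v-1)^m+cv^n$ has only finitely many singular points. You instead prove irreducibility of the generic fibre outright: you identify the curve $\{\Phi(s,u)=\lambda\}$ with a dense open subset of the Fermat-type curve $w^m+c\tau^n=\lambda$ and apply the Capelli binomial criterion, and then you control the base change along $x\mapsto p(x)$ by showing that $s=(w+1)/\tau$ has a simple pole at each point over $\tau=0$, so that a proper irreducible factor of $p(X)-s$ over $\mathbb{C}(\Gamma_\lambda)$ of degree $r<d=\deg p$ would force $d$ to divide every ramification index over such a point and hence to divide $r$, a contradiction. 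Both routes need the standing hypothesis $\deg p\ge 1$ (you state it; the paper uses it implicitly, since for constant $p$ the first of its two singular systems becomes a curve); your route is longer but yields the stronger conclusion that the generic fibre is irreducible and isolates a reusable base-change lemma, while the paper's is shorter at the cost of leaning on the quoted criterion and on a slightly delicate finiteness check for the critical locus.
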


We need the following fact.
\begin{lemma}(\cite{D3})\label{lmtieuchuan} For any polynomial map $P: \mathbb{C}^n\to \mathbb{C}$ the followings are equivalent:
\begin{enumerate}
\item[(1)] The generic fiber of $P$ is connected;
\item[(2)] There do not exist polynomials $H: \mathbb{C}\to \mathbb{C}$ and $Q: \mathbb{C}^n\to \mathbb{C}$ such that $\deg(H)> 1$ and $P= H(Q).$
\end{enumerate}
\end{lemma}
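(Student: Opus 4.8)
The plan is to translate the statement into commutative algebra and prove both implications by their contrapositives. Write $R=\mathbb{C}[x_1,\dots,x_n]$, $F=\Frac(R)=\mathbb{C}(x_1,\dots,x_n)$, and for non-constant $P$ set $K=\mathbb{C}(P)\subseteq F$, a rational function field in one variable. The conceptual core is the dictionary: the generic fiber of $P$ is connected if and only if $K$ is algebraically closed in $F$. Indeed, by generic smoothness in characteristic $0$ the fiber $P^{-1}(c)$ is smooth for generic $c$, so there connected is the same as irreducible; and the scheme-theoretic generic fiber $\Spec\big(R\otimes_{\mathbb{C}[P]}K\big)$ is a localization of the domain $R$, hence integral with function field $F$, so it is geometrically irreducible exactly when $K$ is algebraically closed in $F$. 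A standard spreading-out argument identifies geometric irreducibility of the generic fiber with irreducibility of $P^{-1}(c)$ for generic $c$. I would record this equivalence first (citing the usual geometric-irreducibility criterion), since it reduces everything to a statement about $F$ over $K$.

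For $(2)\Rightarrow(1)$ I would argue the contrapositive geometrically. If $P=H(Q)$ with $d:=\deg H>1$, then $Q$ is non-constant, and for generic $c$ the equation $H(s)=c$ has $d$ distinct roots $s_1,\dots,s_d$, so
$$P^{-1}(c)=\bigcup_{i=1}^d V(Q-s_i).$$
Each $Q-s_i$ is a non-constant polynomial, hence a non-unit, so $V(Q-s_i)\neq\emptyset$ by the Nullstellensatz, and these closed sets are pairwise disjoint because the $s_i$ are distinct. Thus $P^{-1}(c)$ is a disjoint union of $d\ge 2$ non-empty closed sets, hence disconnected, and $(1)$ fails.

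For $(1)\Rightarrow(2)$ I would again use the contrapositive: assume the generic fiber is disconnected, so by the dictionary $K$ is not algebraically closed in $F$. Let $\widetilde K$ be the algebraic closure of $K$ in $F$; it is a finite extension of $K$ of some degree $d>1$ and of transcendence degree $1$ over $\mathbb{C}$, so by L\"uroth's theorem (a curve dominated by $\mathbb{C}^n$ is rational) $\widetilde K=\mathbb{C}(w)$. Let $A$ be the integral closure of $\mathbb{C}[P]$ in $F$. Since $R$ is integrally closed with fraction field $F$, every element integral over $\mathbb{C}[P]$ already lies in $R$, so $A\subseteq R$; moreover $A\subseteq\widetilde K$, and $A$ is the integral closure of the Dedekind domain $\mathbb{C}[P]$ in the finite extension $\widetilde K$, hence a finitely generated Dedekind $\mathbb{C}$-algebra with $\Frac(A)=\widetilde K$. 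Thus $\widetilde S:=\Spec A$ is a smooth affine rational curve, i.e. $\widetilde S=\mathbb{P}^1\setminus T$ for a non-empty finite set $T$.

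The crux — and the step I expect to be the main obstacle — is to show that $T$ is a single point, equivalently $\widetilde S\cong\mathbb{A}^1$. Here I would exploit that $R$ has no non-constant units: the inclusion $A\subseteq R$ gives $A^\ast\subseteq R^\ast=\mathbb{C}^\ast$, whereas for $\widetilde S=\mathbb{P}^1\setminus\{p_1,\dots,p_k\}$ the group $A^\ast/\mathbb{C}^\ast$ is free abelian of rank $k-1$ (generated by the functions whose divisor is supported on $T$ of total degree $0$). Hence $k=1$, so $A\cong\mathbb{C}[w]$ with $w=:Q\in R$ a polynomial. Finally $P\in\mathbb{C}[P]\subseteq A=\mathbb{C}[Q]$, so $P=H(Q)$ for a one-variable polynomial $H$, and
$$\deg H=[\mathbb{C}(Q):\mathbb{C}(P)]=[\widetilde K:K]=d>1,$$
contradicting $(2)$ and completing the equivalence. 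The delicate points to monitor are the geometric-irreducibility dictionary and the rationality of $\widetilde K$, but the genuinely load-bearing idea is the units argument: it forces the intermediate curve to be the affine line and thereby upgrades the abstract field factorization to an honest polynomial identity $P=H(Q)$.
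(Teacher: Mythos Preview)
The paper does not prove this lemma: it is quoted verbatim from \cite{D3} and used as a black box, so there is no ``paper's own proof'' to compare against. What you have written is a complete, self-contained argument supplying what the paper takes for granted.

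Your proof is correct. The direction $(2)\Rightarrow(1)$ is routine, and for $(1)\Rightarrow(2)$ the chain of ideas --- the dictionary between connectedness of the generic fiber and algebraic closedness of $\mathbb{C}(P)$ in $\mathbb{C}(x_1,\dots,x_n)$, L\"uroth to force the intermediate field $\widetilde K$ to be rational, and then the units argument $A^\ast\subseteq R^\ast=\mathbb{C}^\ast$ to pin down $\Spec A\cong\mathbb{A}^1$ --- is exactly the right mechanism and is carried out cleanly. The one place to be slightly more careful in writing is the sentence ``Let $A$ be the integral closure of $\mathbb{C}[P]$ in $F$'': what you use is that $A$ coincides with the integral closure of $\mathbb{C}[P]$ in $\widetilde K$ (true, since anything in $F$ integral over $\mathbb{C}[P]$ is algebraic over $\mathbb{C}(P)$, hence already in $\widetilde K$), so it may be clearer to define $A$ that way from the start. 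The L\"uroth step is also fine --- a transcendence-degree-one subfield of $\mathbb{C}(x_1,\dots,x_n)$ corresponds to a curve dominated by affine space, hence unirational, hence rational --- but you might cite it as ``unirational curves are rational'' rather than as L\"uroth's theorem proper to avoid any ambiguity.
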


\begin{proof}[Proof of Lemma \ref{lmtinhlienthong}]
Let $\Phi: \mathbb{C}^2\to \mathbb{C}^2$ be given by $\Phi(x, y) = (x, g(x, y)).$ We have 
$$f^m(x, y) + cg^n(x, y)= h\circ \Phi,$$
where $h(u, v):= (p(u)v - 1)^m + cv^n$. 

It is easy to see that the restriction of $\Phi $ on $\mathbb{C}^2\setminus A$ is a homeomorphism, where $A= \{(a, y): q(a) = 0, y\in \mathbb{C}\}.$ Then, the generic fiber of $f^m(x, y) + cg^n(x, y)$ is connected if and only if the generic fiber of $h(u, v)$ is connected. 

Now, we assume by contradiction that the generic fiber of $h(u, v)$ is not connected. According to Lemma \ref{lmtieuchuan}, there are polynomials $H: \mathbb{C}\to \mathbb{C}$ and $Q: \mathbb{C}^2\to \mathbb{C}$ such that $\deg(H)> 1$ and 
$$(p(u)v - 1)^m + cv^n= H(Q(u, v)).$$
We consider the singular locus of the polynomials in the above equality. Since $\deg(H)>1$ then the singular locus of $H(Q(u, v))$ has dimension at least one. In particular, there are infinitely many points. However, singular points of $h(u, v)$ are roots of the following systems.
$$\begin{cases}
p^{'}(u)=0, \\
mp(u)(p(u)v-1)^{m-1}+cnv^{n-1}=0
\end{cases}$$
or
$$\begin{cases}
p(u)=0.\\
v=0
\end{cases}$$
It is easy to see that the above systems have only finitely many points. Contradiction.
\end{proof}

\begin{lemma}\label{lm3} Assume that the map $h= f^mg^n: M\to \mathbb{C}^*$ has connected generic fiber and a multiple fiber. Then $n=0$ and $m= \pm 1$.
\end{lemma}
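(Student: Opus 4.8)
The plan is to analyze the map $h = f^m g^n \colon M \to \mathbb{C}^*$ case by case according to the signs of $m$ and $n$, using Lemma~\ref{lmtinhlienthong} together with Lemma~\ref{lmtieuchuan} to force connectedness constraints, and the requirement of a multiple fiber to pin down the exponents. Recall that $f = p(x)(yq(x)-1) - 1$ and $g = yq(x)-1$, and that by hypothesis $p$ and $q$ share a common root while $p+1$ and $q$ have none. First I would reduce to the case $m, n \geq 0$ (or $\leq 0$) by noting that $h$ and $h^{-1}$ have the same fibers up to reparametrization of the target $\mathbb{C}^*$, so it suffices to treat $m \geq 0, n \geq 0$ and the mixed-sign cases $m \geq 0 > n$ (the case $m < 0 \leq n$ being symmetric to this last one after inversion).

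The core of the argument is that a regular function $h = f^m g^n$ on $M$ extends to a rational function on $\mathbb{C}^2$, and its fibers over $\mathbb{C}^*$ are the plane curves $\{f^m g^n = c\}$, equivalently $\{f^m = c g^{-n}\}$ when one clears denominators. When both $m, n > 0$, multiplying through shows the fiber $h^{-1}(c)$ is $\{f^m - c g^n = 0\}$ intersected with $M$; but $f^m - c g^n$ is, up to sign and the change of variables already present in the proof of Lemma~\ref{lmtinhlienthong}, essentially of the shape $(p(u)v-1)^m + c' v^n$ after the homeomorphism $\Phi(x,y) = (x, g(x,y))$, and Lemma~\ref{lmtinhlienthong} then tells us this generic fiber is \emph{connected} rather than having the disconnected generic fiber that a genuine multiple-fiber structure over $S$ would require — more precisely, by Theorem~\ref{thm5} a multiple fiber forces $h$ to factor nontrivially, contradicting connectedness of the generic fiber of $f^m g^n$ unless one of the exponents vanishes. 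So I would argue: if $m > 1$ and $n > 1$ the generic fiber is connected, killing the multiple-fiber requirement; if exactly one of $m, n$ equals $1$ while the other exceeds $1$, a direct factorization/multiplicity computation (examining where $f^{-1}(c_1)$ and $g^{-1}(c_2)$ acquire multiplicity, using the common-root hypothesis on $p, q$) again forces connectedness of the generic fiber; and the remaining small cases $|m|, |n| \le 1$ are checked by hand.

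For the mixed-sign case $m \geq 1 > n$, i.e. $h = f^m / g^{|n|}$, the fiber over $c \ne 0$ is $\{f^m - c g^{|n|} = 0\}$, which is again covered by the connectedness analysis above; so the only way to obtain a multiple fiber is to have one of the factors disappear. Combining all cases leaves $n = 0$ and $h = f^m$; then a multiple fiber of $f^m$ at value $c$ means $f - c^{1/m}\zeta$ is a nontrivial power, but $f$ itself is irreducible (or at any rate $f^{-1}(c_1)$ has the right multiplicity structure only for $m = \pm 1$) — one checks that $f^m$ has a multiple fiber precisely when $|m| = 1$ gives $f$ itself no multiple fibers, forcing the multiplicity to come from $m$, so $m = \pm 1$, as claimed. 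The main obstacle I expect is the bookkeeping in the intermediate cases where exactly one exponent is $1$: there one cannot directly invoke Lemma~\ref{lmtinhlienthong} (which needs both exponents $> 1$), and must instead argue by hand that $f^m g$ or $f g^n$ has connected generic fiber, carefully using the hypothesis that $p(x)$ and $q(x)$ have a common root (so that $f$ and $g$ are not ``independent'' in a way that would permit a product decomposition) while $p(x)+1$ and $q(x)$ do not.
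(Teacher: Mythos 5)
Your proposal has a genuine gap at its logical core. The mechanism you rely on --- ``a multiple fiber forces $h$ to factor nontrivially, contradicting connectedness of the generic fiber'' --- is false as stated, and would in fact prove too much: the maps $h=f^{\pm1}$ that the lemma is designed to single out \emph{do} have a connected generic fiber and a multiple fiber simultaneously (namely $f^{-1}(-1)$, whose divisor on $M$ is that of $p(x)=p_1(x)^d$, because the factor $g$ in $f+1=p(x)g$ is a unit on $M$). So no argument of the form ``multiple fiber $\Rightarrow$ disconnected generic fiber'' can work. What is actually needed, and what the paper does, is to translate ``$h^{-1}(c)$ is a multiple fiber'' into the identity $f^mg^n = c + h_1^s f^l g^k$ with $|s|>1$ in the coordinate ring $\mathbb{C}[x,y][f^{-1},g^{-1}]$ of $M$ (whose units are exactly $c f^l g^k$ --- this is where the extra factors $f^l g^k$, absent from your sketch, come from). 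Only after the sign analysis $ml\ge 0$, $nk\ge 0$ does each of the four sign configurations yield either a decomposition $f^mg^n = H(h_1)$ with $\deg H = |s|>1$, killed by Lemma \ref{lmtieuchuan}, or an identity of the form $f^m - cg^{-n} = h_1^s$, killed by Lemma \ref{lmtinhlienthong}. You never write down this identity, and without it the case analysis cannot be run.

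A second, related error: for $m,n>0$ you identify the fiber $h^{-1}(c)$ with $\{f^m - cg^{n}=0\}$ and invoke Lemma \ref{lmtinhlienthong}. The fiber is $\{f^mg^n - c=0\}$; the curves $\{f^m - cg^{n}=0\}$ are fibers of $f^m/g^{n}$ and are relevant only in the mixed-sign case. In the all-positive case the multiple-fiber identity forces $l=k=0$, i.e.\ $f^mg^n = c + h_1^s$, which exhibits $f^mg^n$ as $H\circ h_1$ with $H(t)=t^s+c$ and contradicts connectedness via Lemma \ref{lmtieuchuan}, not via Lemma \ref{lmtinhlienthong}. Finally, two cases are left as ``checked by hand'': the exclusion of $m=0$, $n=\pm1$ (one must verify that $g$ has no multiple fiber on $M$) and of $|m|>1$, $n=0$. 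For the latter your stated reason (irreducibility of $f$ and multiple fibers of $f^m$) is not the operative one: $f^m=c$ visibly splits into the $|m|$ components $f=\zeta c^{1/m}$, so it is connectedness of the generic fiber, not multiplicity, that forces $m=\pm1$ there.
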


\begin{proof} If $n=0$ then $m= \pm 1$, because $h$ has connected generic fiber. Similarly, if $m=0$ then $n= \pm 1.$ However, since $\deg(f)> \deg(g)$, it is easy to show that the function $g: \mathbb{C}^2\setminus \{fg=0\}\to \mathbb{C}^{*}$ has not any multiple fiber. 

Now, we assume that $mn\neq 0$. Since $M= \mathbb{C}^2\setminus \{fg=0\}$ and $f, g$ are two irreducible polynomial then the map $h: M\to \mathbb{C}^{*}$ has multiple fiber if and only if, there exist $c\in \mathbb{C}^{*}, h_1\in \mathbb{C}[x, y], h_1\not\vdots f, h_1\not\vdots g$ and integer numbers $s, l, k, |s|>1,$ such that
\begin{equation}
f^mg^n= c+ h_1^sf^lg^k.
\end{equation}
Since $f, g, h_1$ are pairwise relatively prime then $ml\geq 0$ and $nk\geq 0.$ There are four cases.

a) $m, l, n, k\geq 0$: This implies that $l = k =0$ and the generic fiber of $h$ has at least $|s|>1$ connected components. Contradiction.

b) $m, l, n, k\leq 0$: By dividing two sides of the equality (2) by the lowest powers of $f$ and $g$, one can prove that $m=l$ and $n=k.$ It means $$(f^{m}g^{n})^{-1}= \frac{1}{c}(1-h_1^s).$$
So the generic fiber of $f^mg^n$ is not connected.

c) $m, l\geq 0$ and $n, k\leq 0$: Similarly, we get $l=0$ and $n=k.$ Hence $f^m= cg^{-n}+ h_1^s$. But, this implies that the generic fiber of the polynomial $f^m- cg^{-n}$ is not connected, contradicts to Lemma \ref{lmtinhlienthong}.

d) $m, l\leq 0$ and $n, k\geq 0$: By the same argument, we also obtain the contradiction.
\end{proof}

The main result in this paper is the following.

\begin{theorem}\label{mainthm}
Let $p(x)$ and $q(x)\in \mathbb{C}[x]$ be two polynomials such that they have at least one common root and $p(x) + 1, q(x)$ have no common root. Then, if there exist an integer number $s>1$ and a polynomial $p_1\in \mathbb{C}[x]$ such that 
$$p(x)= p_1(x)^s,$$
 the strictly positive dimensional components of $\mathcal{V}_1(M)$ are the translated 1-dimensional sub-tori
$$W_j = \epsilon_j \times \mathbb{C}^{*},$$
 where $d$ is the maximum of the exponent $s$ above and $\epsilon_j = \exp{(2\pi ij/d)}$ for $j = 1, 2, \ldots, d-1$. Moreover, 
for a local system $\mathcal{L}\in W_j$ one has $\dim H^1(M, \mathcal{L}) \geq  1$ and equality holds
with finitely many exceptions.

Otherwise, there do not exist strictly positive dimensional components of $\mathcal{V}_1(M)$.

\end{theorem}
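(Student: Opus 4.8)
The strategy is to combine the structural results of Section 2 with the explicit combinatorial description forced by Lemmas \ref{lmtinhlienthong} and \ref{lm3}. By the corollary on vanishing resonance and Theorem \ref{thm3}, every strictly positive dimensional component of $\mathcal{V}_1(M)$ is a \emph{translated} subtorus, so by Theorem \ref{thm1} it arises from a regular map $M \to S$ onto a smooth curve $S$ with $b_1(S) = 1$; by Remark \ref{rm1}, $S = \mathbb{C}^*$. The discussion preceding Lemma \ref{lmtinhlienthong} already reduces any such map to the form $h = f^m g^n$ with $m,n \in \mathbb{Z}$, and Lemma \ref{lm3} then forces $n = 0$ and $m = \pm 1$. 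Thus the \emph{only} candidate map is (up to inversion) $h = f : M \to \mathbb{C}^*$, and everything reduces to analyzing the multiple fibers of $f = p(x)(yq(x)-1) - 1$ regarded as a map to $\mathbb{C}^*$, together with Theorems \ref{thm2} and \ref{thm5}.

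\medskip

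First I would identify the fibers of $f : M \to \mathbb{C}^*$ and compute their multiplicities. Since $M = \mathbb{C}^2 \setminus \{fg = 0\}$, a fiber $f^{-1}(c)$ with $c \neq 0$ is multiple precisely when $f - c$ has a repeated irreducible factor as a polynomial on $\mathbb{C}^2$. Writing $f - c = p(x)\bigl(y q(x) - 1\bigr) - (1+c)$, I would analyze its factorization. The crucial special value is $c = -1$: then $f - c = p(x)\bigl(yq(x)-1\bigr) = p(x)\cdot g(x,y)$, and since $g$ is irreducible, the multiplicity of $f^{-1}(-1)$ is governed by the multiplicities of the roots of $p(x)$ — but one must be careful, because $\{p(x)=0\}$ and $\{g=0\}$ may share the common root hypothesized in the theorem, so on $M$ the factor $g$ has been removed and only $p(x)$ (with its multiplicities) and the factors of $p$ coprime to... here the hypothesis "$p$ and $q$ have a common root" and "$p+1$ and $q$ have no common root" enters: it ensures $f^{-1}(-1)$ is the one fiber where $p(x)^s = p_1(x)^{s}$ contributes multiplicity $s = d$, while for $c \neq -1, 0$ one checks $f - c$ is reduced (here "$p+1$ and $q$ have no common root" rules out an extra degeneration at a nearby value). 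So $f : M \to \mathbb{C}^*$ has a multiple fiber iff $p = p_1^s$ for some $s > 1$, with the multiplicity of that single special fiber equal to $d := \max\{s : p = p_1^s\}$.

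\medskip

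Next, with $S = \mathbb{C}^*$ so that $\chi(S) = 0$, Theorem \ref{thm2} says the components are parametrized by the \emph{nontrivial} characters of $T(f)$, and Theorem \ref{thm5} computes $T(f) = \bigoplus_{c \in C(f)} \mathbb{Z}/m_c\mathbb{Z}$. From the previous paragraph $C(f)$ contributes a single nontrivial summand $\mathbb{Z}/d\mathbb{Z}$ (all other bifurcation values give multiplicity $1$), so $T(f) \cong \mathbb{Z}/d\mathbb{Z}$ and $\hat T(f)$ has $d - 1$ nontrivial elements, indexed by $j = 1, \dots, d-1$ via $\epsilon_j = \exp(2\pi i j/d)$. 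Each gives a component $W_j = \rho_j \cdot f^*(\mathbb{T}(\mathbb{C}^*))$; identifying $\mathbb{T}(M) \cong (\mathbb{C}^*)^2$ with coordinates dual to the meridians of $C_1 = \{f=0\}$ and $C_0 = \{g=0\}$, and noting $f^* \mathbb{T}(\mathbb{C}^*)$ is the subtorus where the first coordinate is $1$, one gets $W_j = \epsilon_j \times \mathbb{C}^*$ (the torsion character $\rho_j$ shifts only the $f$-coordinate since the fiber data lives entirely along $f$). Finally, the dimension count $\dim H^1(M,\mathcal L) \geq 1$ on $W_j$ with equality off a finite set follows from the fact that $W_j$ is a one-dimensional component of $\mathcal V_1(M)$: on it the jump is generically exactly $1$ (it is not contained in $\mathcal V_2(M)$, whose components are lower-dimensional here), and the finitely many exceptional points are the intersections with other components.

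\medskip

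\textbf{Main obstacle.} The routine structural part is handled by the cited theorems; the real work — and the step I expect to be most delicate — is the exact computation of the fiber multiplicities of $f : M \to \mathbb{C}^*$, and in particular verifying that the two hypotheses on $p$ and $q$ do exactly what is needed: that "$p,q$ share a common root" forces the relevant reducibility/multiplicity to sit on $M$ at $c = -1$ (rather than being killed by the removal of $\{g=0\}$), and that "$p+1$ and $q$ have no common root" prevents any \emph{other} value $c$ from producing a multiple fiber. Pinning down that no spurious multiple fibers occur — i.e. that $f - c$ is reduced on $M$ for every $c \neq -1, 0$ — requires a careful case analysis of the possible factorizations of $p(x)(yq(x)-1) - (1+c)$ as a polynomial in $y$ over $\mathbb{C}[x]$, which is where all the hypotheses get used.
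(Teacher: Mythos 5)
Your proposal follows essentially the same route as the paper: reduce via Theorem \ref{thm1}, Remark \ref{rm1} and Lemma \ref{lm3} to the single candidate map $h=f^{\pm1}$, observe that the only possible multiple fibre is $f^{-1}(-1)=\{p(x)g(x,y)=0\}\cap M=\{p(x)=0\}\cap M$ with multiplicity $d$, and then apply Theorems \ref{thm2} and \ref{thm5} to get $T(f)\cong\mathbb{Z}/d\mathbb{Z}$ and hence $d-1$ translated components; the paper handles the final inequality $\dim H^1(M,\mathcal{L})\ge 1$ by citing Corollary 5.9 of \cite{D3} rather than by your genericity argument, but the substance is the same.

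One local slip worth fixing: in the identification of the components you have the two coordinates interchanged. Since $f=pg-1\equiv -1$ on $C_0=\{g=0\}$ and $f$ vanishes to order one on $C_1=\{f=0\}$, the pullback $f^*\bigl(\mathbb{T}(\mathbb{C}^*)\bigr)$ is the subtorus on which the monodromy around $C_0$ is trivial and the monodromy around $C_1$ is free (in the paper's convention $(\lambda_0,\lambda_1)\mapsto(1,\lambda)$), not the one where the $f$-monodromy is $1$ as you state. Correspondingly, the kernel of $f_*:H_1(M)\to H_1(\mathbb{C}^*)$ is generated by the meridian of $C_0$, so the torsion character $\epsilon_j$ translates the $C_0$- (i.e.\ $g$-) coordinate, not the $f$-coordinate; with your stated ordering of coordinates your $W_j$ is the wrong subtorus of $(\mathbb{C}^*)^2$, even though the formula $W_j=\epsilon_j\times\mathbb{C}^*$ happens to look identical to the paper's.
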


\begin{proof}
According to Theorem \ref{thm1} and Remark \ref{rm1}, any translated positive dimensional component of $\mathcal{V}_1(M)$ 
comes from a map $h: M\to \mathbb{C}^*$ which has connected generic fiber.
 
According to Lemma \ref{lm3}, the only morphisms associated to strictly positive dimensional components of $\mathcal{V}_1(M)$ are $f: M\to \mathbb{C}^{*}$ and $f^{-1}: M\to \mathbb{C}^{*}, z\mapsto f(z)^{-1},$ but they give the same associated component of $\mathcal{V}_1(M)$. Thus all translated positive dimensional components of $\mathcal{V}_1(M)$ are associated to the map $f: M\to \mathbb{C}^{*}$.

On the other hand, it is easy to see that the only possibly multiple fiber of $f$ is $f^{-1}(-1)$. Hence, according to Theorem \ref{thm5}, if $p(x)$ is not a power of a polynomial then $T(f)=0$ and there does not exist strictly positive dimensional components of $\mathcal{V}_1(M)$; unless $T(f)= \mathbb{Z}/ d\mathbb{Z}$, where 
$$d =\max\{s\in \mathbb{N}: p(x)= p_1(x)^s, p_1\in \mathbb{C}[x]\}.$$
We now consider the later case. It is deduced from Theorem \ref{thm3} that there are exactly $d-1$ associated 1-dimensional translated components. If we identify $\mathbb{T}(M) = \mathbb{C}^{*}$ by associating to a local system $\mathcal{L}\in \mathbb{T}(M)$ the
two monodromies $(\lambda_0, \lambda_1)$ about the curves $C_0$ and $C_1$, and in a similar way
$\mathbb{T}(\mathbb{C}^{*}) = \mathbb{C}^{*}$, then the induced morphism
$$f^{*} : \mathbb{T}(\mathbb{C}^{*})\to  \mathbb{T}(M)$$
is just  $\lambda\mapsto (1, \lambda)$.

With these identifications, the above $d-1$ associated 1-dimensional translated components of $\mathcal{V}_1(M)$ are given by $W_j = \epsilon_j \times \mathbb{C}^{*}$, where $\epsilon_j = \exp{(2\pi ij/d)}$ for $j = 1, 2, \ldots, d-1$. 

The inequality on dimension of cohomology group of $M$ is the direct consequence  of Corollary 5.9 in \cite{D3}.
\end{proof}



\begin{thebibliography}{11}

\bibitem [A] {A} Arapura, D., {\em Geometry of cohomology support loci for local systems. I,} J. Algebraic
Geom., 6 (1997), no. 3, 563-597.

\bibitem [Be] {B} Beauville, A., {\em Annulation du $H^1$ pour les fibrés en droites plats,} in: Complex algebraic
varieties (Bayreuth, 1990), 1-15, Lecture Notes in Math., vol. 1507, Springer, Berlin, 1992.

\bibitem [Br] {Br} Broughton, S. A., {\em Milnor numbers and the topology of polynomial hypersurfaces,}
Invent. Math., 92 (1988), no. 2, 217- 241.

\bibitem [D1] {D1} Dimca, A., {\em Singularities and Topology of Hypersurfaces,} Universitext, Springer,
1992.

\bibitem [D2] {D2} Dimca, A., {\em Sheaves in Topology,} Universitext, Springer-Verlag, 2004.

\bibitem [D3] {D3} Dimca, A., {\em Characteristic varieties and constructible sheaves,} Rend. Lincei Mat.
Appl., 18 (2007), no. 4, 365-389.

\bibitem [D4] {D4} Dimca, A., {\em On the irreducible components of characteristic varieties,} An. Stiinţ. Univ. "Ovidius" Constanta Ser. Mat. 15 (2007), no. 1, 67–73.

\bibitem [D5] {D5} Dimca, A., Papadima, S. and Suciu, A., {\em Topology and geometry of cohomology jump loci,}  Duke Math. J., 148 (2009), no. 3, 405-457.

\bibitem [GL] {GL} Green, M. and Lazarsfeld, R., {\em Higher obstructions to deforming cohomology
groups of line bundles,} J. Amer. Math. Soc., 4 (1991), no. 1, 87- 103.

\bibitem [HL] {HL} H\`a, H. V. and L\^e, D. T., {\em Sur la topologie des polynômes complexes,} Acta Math.
Vietnamica, 9 (1984), 21- 32.

\bibitem [S] {S} Simpson, C., {\em Subspaces of moduli spaces of rank one local systems,} Ann. Sci. ´Ecole Norm.
Sup. 26(1993), no. 3, 361–401.

\bibitem [Z] {Z} Zahid, R., {\em Broughton polynomials and characteristic varieties,} Studia Sci. Math. Hungarica, 47 (2010), no.2, 214-222.
\end{thebibliography}
\end{document}